\newtheorem{theorem}{Theorem}
\newtheorem*{proposition*}{Proposition}
\newtheorem*{lemma*}{Lemma}
\theoremstyle{definition}
\subjclass[2010]{ Primary: 11B85, 37A45}
\keywords{Generalised polynomials, automatic sequences, nilmanifolds}
\newtheorem*{observation*}{Observation}
\newtheorem*{claim*}{Claim}
\newtheorem{example}[theorem]{Example}
\newtheorem{remark}[theorem]{Remark}
\newtheorem*{remark*}{Remark}
\newtheorem*{conjecture*}{Conjecture}
\newtheorem*{convention*}{Convention}
\renewcommand{\geq}{\geqslant}
\renewcommand{\leq}{\leqslant}
\theoremstyle{plain}
\renewcommand{\tilde}{\widetilde}
\newcommand{\NN}{\mathbb{N}}
\newcommand{\QQ}{\mathbb{Q}}
\newcommand{\ZZ}{\mathbb{Z}}
\newcommand{\RR}{\mathbb{R}}
\newcommand{\R}{\RR}
\newcommand{\N}{\NN}
\newcommand{\Z}{\ZZ}
\newcommand{\floor}[1]{\left\lfloor #1 \right\rfloor}
\newcommand{\parbreak}[1]{
\begin{center}
***
\end{center}
}
\definecolor{BrickRed}{rgb}{1,0,0}
\definecolor{Black}{cmyk}{0,0,0,1}
\newcommand{\comment}[1]{}%
\begin{document}

\author[J. Byszewski]{Jakub Byszewski$^{2}$}

\email{jakub.byszewski@gmail.com}

\author[J.\ Konieczny]{Jakub Konieczny$^{1,2}$}
\email{jakub.konieczny@gmail.com}

\address[1]{Einstein Institute of Mathematics\\ Edmond J. Safra Campus\\ The Hebrew University of Jerusalem\\ Givat Ram\\ Jerusalem, 9190401\\ Israel}

\address[2]{Department of Mathematics and Computer Science\\
Institute of Mathematics\\
Jagiellonian University\\
ul. prof. Stanis\l{}awa \L{}ojasiewicza 6\\
30-348 Krak\'{o}w\\
Poland}

\title{Factors of generalised polynomials and automatic sequences}

\maketitle 

\begin{abstract} The aim of this short note is to generalise the result of Rampersad--Shallit saying that an automatic sequence and a Sturmian sequence cannot have arbitrarily long common factors. We show that the same result holds if a Sturmian sequence is replaced by an arbitrary sequence whose terms are given by a generalised polynomial (i.e., an expression involving algebraic operations and the floor function) that is not periodic except for a set of density zero.
	 
\end{abstract}

A Sturmian sequence is defined as an infinite word with values $0$ and $1$ that encodes the set of times at which the orbit of a point with respect to an irrational rotation by $\theta$ hits a given arc of length $\theta$. It is well-known that a Sturmian sequence is not automatic, i.e., it cannot be produced by a finite automaton that reads the base-$k$ digits of the input in some fixed base $k\geq 2$.  In a recent note \cite{RampersadShallit2018}, Rampersad and Shallit have shown that not only is it impossible for automatic and Sturmian words to coincide---their common factors (i.e., finite blocks of consecutive symbols) have in fact little in common.

\begin{theorem}[Rampersad--Shallit]\label{thmRS} Let $x$ be a $k$-automatic sequence and let $a$ be a Sturmian sequence. There exists a constant $C$ (depending on $x$ and $a$) such that if $x$ and $a$ have a factor in common of length
$n$ then $n \leq  C$. \end{theorem}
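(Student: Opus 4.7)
The plan is to leverage two structural facts in tandem: the uniform recurrence of Sturmian sequences, and the rigidity of subshifts generated by $k$-automatic sequences.

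First, I would recall that any Sturmian sequence $a$ is uniformly recurrent, i.e.\ for every $m\in\N$ there is an $M = M(m)$ such that every factor of $a$ of length $M$ contains every factor of $a$ of length $m$. This is immediate from the fact that $a$ is a symbolic coding of a minimal irrational rotation of the circle.

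Next, suppose toward contradiction that $x$ and $a$ share a common factor of every length. For each $m$, pick a common factor $w_m$ of length at least $M(m)$. Since $w_m$ occurs in $a$, it contains every factor of $a$ of length $m$; since $w_m$ also occurs in $x$, every factor of $a$ of length $m$ occurs in $x$. Letting $m\to\infty$ shows that every factor of $a$ is a factor of $x$, equivalently, that the Sturmian subshift $\overline{\{\sigma^n a : n\geq 0\}}$ embeds into the orbit closure of $x$ under the shift.

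It remains to rule out such an embedding, and this is where I expect the real work to lie. Both sides have linear factor complexity ($p_a(n) = n+1$ for Sturmian, and $p_x(n) = O(n)$ for automatic by Cobham), so the contradiction cannot come from complexity counts alone. Instead, the idea is to invoke structural theorems for automatic subshifts (after Cobham, Moss\'e, Durand) which describe them as primitive substitutive systems with constant expansion factor $k$, and to combine this with the classification of substitutive Sturmian sequences: these exist only for slopes $\theta$ whose continued fraction is eventually periodic, in which case the underlying substitution has expansion factor a quadratic irrational, never an integer $k\geq 2$. This arithmetic incompatibility between the integer dilation $k$ of the automatic side and the algebraic-irrational dilation forced by the Sturmian side is essentially the content of Cobham's theorem for substitutive sequences.

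The main obstacle is this final step: translating "Sturmian subshift sits inside an automatic subshift" into a statement about compatible substitutions, and then extracting the required Cobham-type arithmetic contradiction. Steps~1--2 are routine, but Step~3 is where the depth of the theorem lives, and it is precisely the step that must later be adapted when $a$ is replaced by an arbitrary non-periodic generalised polynomial sequence as in the main result of this note.
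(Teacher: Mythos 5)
Your Steps 1--2 are correct: uniform recurrence of Sturmian words does upgrade ``arbitrarily long common factors'' to ``every factor of $a$ is a factor of $x$'', i.e.\ the Sturmian subshift $X_a$ sits inside the orbit closure $X_x$. But Step 3, which you yourself identify as where ``the depth of the theorem lives'', is not a proof; it is a plan, and the plan as stated rests on an unproved (and imprecisely stated) transfer principle. The orbit closure of a $k$-automatic sequence is in general not minimal and not a ``primitive substitutive system with constant expansion factor $k$''; what you actually need is a theorem asserting that a minimal subsystem of $X_x$ (equivalently, a uniformly recurrent point all of whose factors are factors of $x$) is itself generated by a $k^j$-automatic, i.e.\ constant-length substitutive, sequence. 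Results of this type exist in the Cobham--Durand circle of ideas (via return words), but invoking them correctly is precisely the missing work, and the subsequent appeal to the classification of substitutive Sturmian sequences adds further complications (that classification constrains both the slope and the intercept). If you do get that far, there is a shortcut you overlook: a point of a minimal constant-length substitutive subshift has rational letter frequencies, while every point of $X_a$ has letter frequency $\theta\notin\QQ$, so no Cobham-type multiplicative-independence argument is needed at the end.

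It is also worth comparing with the route the paper (following Rampersad--Shallit) actually takes, which is far more elementary and bypasses subshift structure theory entirely: finiteness of the $k$-kernel gives $r$ and $0\le s<t<k^r$ with $x(k^rn+s)=x(k^rn+t)$ for all $n$; a sufficiently long common factor then forces $a(n)=a(n+d)$, $d=t-s$, for all $n$ in a long window intersected with an arithmetic progression mod $k^r$; and Kronecker/Weyl equidistribution of $n\theta$ along that progression shows a Sturmian word cannot satisfy such a relation, giving the bound $C$. Beyond being self-contained, this argument is the one that generalises: replacing Kronecker by the Bergelson--Leibman well-distribution theorem on nilmanifolds yields Theorem \ref{main}, whereas your route, even if completed, relies on substitution-theoretic classifications that have no analogue for generalised polynomial sequences.
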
 

A Sturmian sequence can be equivalently defined as an infinite word $a_0 a_1 a_2 \cdots$ of the form $$a_n = \floor{ \alpha (n+1) + \rho } - \floor{ \alpha n + \rho } - {\floor{\alpha}},$$ where $\alpha,\rho \in \RR$ with $\alpha \not \in \QQ$.  
An expression of this form is a very simple example of a \emph{generalised polynomial}, i.e., a function $a\colon \N_0 \to \R$ given by an expression involving real constants, the algebraic operations of addition and multiplication along with the (possibly iterated) use of the floor function. By a result of Bergelson--Leibman \cite{BergelsonLeibman2007}, generalised polynomials are intimately related to dynamics on nilmanifolds. (In the case of a Sturmian sequence the corresponding nilmanifold is the circle with the irrational rotation by   $\theta$.) In a recent work, we have shown that generalised polynomials cannot be automatic unless they are periodic outside of a set of density zero (for this and related results see \cite{ByszewskiKonieczny2017} and \cite{ByszewskiKonieczny2016}). It is therefore natural to ask whether the result on common factors generalises to this more general context. This is indeed the case. The following result generalises \cite[Theorem B]{ByszewskiKonieczny2017}. We remind the reader that a set $A\subset \N_0$ has upper Banach density zero if for every $\varepsilon>0$ any interval of sufficiently large length $l$ contains at most $\varepsilon l$ elements of $A$. Note that whether two sequences have arbitrarily long common factors or not is independent of changing their values on a set of upper Banach density zero.

\begin{theorem}\label{main} Let $x$ be a k-automatic sequence and let $a$ be a generalised polynomial sequence. Suppose that $x$ and $a$ share arbitrarily long factors. Then there exists a periodic sequence $p$ such that $a$ and $p$ coincide outside of a set of upper Banach density zero and $x$ has arbitrarily long common factors with $p$. \end{theorem}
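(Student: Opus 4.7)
The plan is to reduce to the authors' earlier result \cite[Theorem~B]{ByszewskiKonieczny2017}, which asserts that a generalised polynomial which is $k$-automatic must be periodic off a density-zero set, by extracting a common accumulation point of the shifts of $x$ and $a$ and transferring the resulting rigidity back to $a$ itself.

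First I would apply the Bergelson--Leibman theorem to write $a_n=F(g^n\Gamma)$ for a minimal nilsystem $(G/\Gamma,g)$ (replacing $G/\Gamma$ by the orbit closure of the identity coset if necessary) and a bounded, Riemann-integrable, piecewise-polynomial function $F$ whose discontinuity set has Haar measure zero. The hypothesis provides sequences $n_L,m_L$ and lengths $L\to\infty$ with $x_{n_L+j}=a_{m_L+j}$ for $0\le j<L$. By compactness, pass to a subsequence so that $T^{n_L}x$ and $T^{m_L}a$ converge pointwise---necessarily to a common limit $z$, since they agree on arbitrarily long prefixes---and so that $g^{m_L}\Gamma\to y$ in $G/\Gamma$. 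Unique ergodicity of the $g$-rotation, together with the measure-zero discontinuity set of $F$, then yields $z_j=F(g^jy)$ outside a set of $j$ of density zero; in particular, $z$ agrees off a density-zero set with the generalised polynomial sequence $p_0(j):=F(g^jy)$.

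Second I would upgrade $z$ to a genuinely automatic sequence. Sliding the start of the common factor, replace $(n_L,m_L,L)$ by $(n_L+c_L,m_L+c_L,L-c_L)$, choosing $c_L<k^{r_L}$ so that $n_L+c_L$ is divisible by $k^{r_L}$, with $r_L\to\infty$ growing slowly enough that $L-c_L\to\infty$. The block of length $k^{r_L}$ of $T^{n_L+c_L}x$ is then determined by the DFA state reached after reading the high-order digits of $(n_L+c_L)/k^{r_L}$ followed by $r_L$ zeros. A pigeonhole on the finite state set lets me pass to a further subsequence along which this state is a fixed $q^\ast$, and then $z$ coincides with the automatic sequence $\tilde x$ generated by the DFA started at $q^\ast$. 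Since $\tilde x$ agrees off density zero with the generalised polynomial $p_0$, it is automatic up to a density-zero correction, so \cite[Theorem~B]{ByszewskiKonieczny2017} produces a period $P$ and a $P$-periodic sequence equal to $p_0$ off density zero.

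Finally I would propagate the periodicity from $p_0$ back to $a$. The density-one coincidence $F(g^jy)=F(g^{j+P}y)$, combined with minimality and unique ergodicity of the $g$-rotation, forces $F=F\circ g^P$ almost everywhere on $G/\Gamma$; a second appeal to unique ergodicity then shows that for every residue $r\pmod P$ the values $\{a_n:n\equiv r\pmod P\}$ are constant off a density-zero subset of $\N_0$. Taking $p$ to be the $P$-periodic sequence with these constants as values yields the required approximation to $a$, and the arbitrarily long factors common to $x$ and $a$ descend (after deleting the density-zero exceptional positions, which does not affect the existence of long common factors) to arbitrarily long factors common to $x$ and $p$. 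The principal technical obstacle is the second step: arranging the slide of $n_L$, the state pigeonhole, and the identification $z=\tilde x$ simultaneously with a growing common-factor length is delicate; moreover, since $F$ is only Riemann integrable, the transfer between the nilmanifold and the sequence must be carried out via equidistribution rather than via a naive topological limit.
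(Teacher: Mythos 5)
Your plan---extract a common limit point $z$ of the shifts of $x$ and $a$, exhibit it as simultaneously automatic and a generalised polynomial, and then invoke \cite[Theorem~B]{ByszewskiKonieczny2017} as a black box---is structurally different from the paper's proof, which never forms a limit sequence: there one uses finiteness of the $k$-kernel to find $d$ and $r$ with $x(k^rn+s)=x(k^rn+t)$, transfers the long common factors into the relation $a(n)=a(n+d)$ along long pieces of arithmetic progressions, and deduces $T^d(\operatorname{int}(A))\subset\operatorname{cl}(A)$ from well-distribution, whence $A$ has measure $0$ or $1$ by ergodicity. The main gap in your route is that the black box does not fit the key. Theorem~B applies to a \emph{single} sequence that is both automatic and a generalised polynomial (indeed, Theorem~\ref{main} specialises to it exactly when $x=a$). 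Your construction instead produces an automatic sequence $\tilde x$ and a generalised polynomial $p_0(j)=F(g^jy)$ which agree only off the sparse set of $j$ with $g^jy$ in the discontinuity locus of $F$. Neither object lies in the intersection of the two classes: an automatic sequence altered on a sparse set need not remain automatic, and a generalised polynomial altered on a sparse set need not remain a generalised polynomial. To bridge this you would need a version of Theorem~B that is robust under such perturbations, and proving that robust version amounts to re-running the weak-periodicity-plus-well-distribution argument that the paper carries out directly---so the reduction does not save the work it is meant to save.

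Two further points need repair even granting a robust Theorem~B. First, you conflate density zero with upper Banach density zero: a set of density zero may contain arbitrarily long intervals, so ``deleting the density-zero exceptional positions'' can destroy all long common factors. The upgrade to upper Banach density zero is not automatic; it uses \cite[Corollary 1.12]{ByszewskiKonieczny2017}, which says that the visits of a nilorbit to a measure-zero semialgebraic set form a set of upper Banach density zero. Second, the final inference from $F=F\circ g^P$ a.e.\ to ``$F$ is a.e.\ constant on each residue class'' requires ergodicity of $g^P$, which fails unless the nilsystem is totally minimal; one must first decompose along arithmetic progressions as in \cite[Remark 1.14]{ByszewskiKonieczny2017} (the paper performs this reduction explicitly). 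This matters because mere a.e.\ $g^P$-invariance of $F$, i.e.\ $a(n)=a(n+P)$ off a sparse set, does not by itself yield agreement of $a$ with a periodic sequence off a sparse set---the values along a residue class could still change infinitely often at the exceptional positions. Your sliding-and-pigeonhole step, by contrast, is workable (after an additional pigeonhole on the state reached by reading a long block of zeros, so that the aligned blocks stabilise), but it is rendered unnecessary by the paper's more direct use of the kernel relation.
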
 

Since a Sturmian word has irrational letter frequencies, it is not periodic even up to density zero, and hence it cannot have arbitrarily long common factors with an automatic word. Thus Theorem \ref{main} generalises Theorem \ref{thmRS}.

A proof of Theorem \ref{main} follows closely the lines of the proof of the Sturmian case by Rampersad--Shallit. However, while in the case of Sturmian sequences one may reason directly using Kronecker's theorem, we need to use the Bergelson--Leibman theorem to represent a generalised polynomial in terms of values of a semialgebraic function along an orbit on  a nilmanifold. These methods are applied in an analogous manner as in the proof of \cite[Theorem B]{ByszewskiKonieczny2017}.

We also note that the result continues to hold if the class of automatic sequences is replaced by a slightly more general class of weakly periodic sequences defined in \cite{ByszewskiKonieczny2017}. In order not to clutter the proof, we first show the claim for automatic sequences, and then in Remark \ref{resultforwp} show how to modify the argument in order to get the stronger claim. 

\begin{proof}[Proof of Theorem \ref{main}] Let $x$ be an automatic sequence and let $a\colon \N_0 \to \R$ be a generalised polynomial. Assume that $x$ and $a$ have arbitrarily long common factors. We first show that we may assume that $a$ takes only finitely many values. 

The automatic sequence $x$ takes values from a finite set $\Omega$. Let $\varpi\in \R$ be an element outside of $\Omega$. It is easy to see (using  \cite[Lemma 1.2]{ByszewskiKonieczny2016}) that the function $\tilde{a} \colon \N_0 \to \R$ given by $$\tilde{a}(n)=\begin{cases} a(n) &\text{if } a(n)\in \Omega,\\ \varpi &\text{otherwise}\end{cases}$$ is also a generalised polynomial. Clearly, $x$ and $a$ have the same common factors as $x$ and $\tilde{a}$. Thus, if we knew the claim for the sequences $x$ and $\tilde{a}$, we could conclude that $\tilde{a}$ takes the value $\varpi$ on a set of upper Banach density zero, and hence $a$ and $\tilde{a}$ coincide outside of a set of upper Banach density zero. Therefore, without loss of generality we may assume that $a$ takes only finitely many values $\{c_1,\ldots,c_r\}$. Furthermore, a similar argument that involves merging all but one of the values $c_1,\ldots,c_r$ allows us to assume that $x$ and $a$ admit only two values that may further be taken to be $0$ and $1$.

We now apply the Bergelson--Leibman theorem \cite{BergelsonLeibman2007} to find a representation of $a$ in terms of an orbit on a nilmanifold. (For more information on nilmanifolds and generalised polynomials, see \cite{BergelsonLeibman2007} and references therein; the necessary prerequisites are also briefly discussed in \cite[pp.\ 9--10]{ByszewskiKonieczny2017} and we will use this reference for convenience.) By \cite[Theorem 1.13]{ByszewskiKonieczny2017}, there exists a minimal nilsystem $(X,T)$, a point $z\in X$, and a semialgebraic subset $A$ of $X$ such that
\begin{equation}\label{reprezentacja} a(n) = \begin{cases}  1 &\text{ if } T^n(z) \in A,\\ 0 &\text{ otherwise.}\end{cases}\end{equation}

We will first prove the result under the additional assumption that $(X,T)$ is totally minimal. The sequence $x$ is $k$-automatic, and we can choose an integer $r$ such that the cardinality of the $k$-kernel of $x$ is strictly smaller than $k^r$. Then there exist integers $0\leq s <t <k^r$ such that $x(k^r n+s)=x(k^r n+t)$ for $n\in \N_0$. Let $d=t-s$. Since $a$ and $x$ have arbitrarily long common factors, we can conclude that for each $L$ there exist $n_0 \geq 0$ and $0\leq u < k^r$ such that $$a(n) = a(n + d) \text{ for } n_0 \leq n < n_0 + L \text{ and } n \equiv u \pmod{k^r}.$$

We claim that it follows that $T^d(\mathrm{int}(A)) \subset \mathrm{cl}(A)$. To this end, pick a point $y \in \mathrm{int}(A)$ and pick a neighbourhood $U \subset A$ of $y$ contained in A. Since $(X,T)$ is totally minimal, we may apply well-distribution of generalised polynomials \cite[Theorem B]{BergelsonLeibman2007} to conclude that for large enough $L$ and all $n_0\geq 0$ there exists an integer $n$ such that $n_0 \leq n < n_0 + L$, $ n \equiv u \pmod{k^r}$, and $T^n(z)\in U$. It follows that $a(n) = a(n+d) =1$, and hence $T^d(U) \cap A \neq \emptyset$. Since $U$ was an arbitrary neighbourhood of $y$, we conclude that $T^d(y) \in \mathrm{cl}(A)$. It follows that $T^d(\mathrm{int}(A)) \subset \mathrm{cl}(A)$. Since $A$ is semialgebraic, the sets $\mathrm{int}(A)$ and $\mathrm{cl}(A)$ have the same measure (this follows, e.g., from \cite[Proposition 2.8.13]{BCR}). Since minimal nilsystems are ergodic, we conclude that $A$ is either a set of measure zero or of full measure, and hence $a$ is in any case constant outside of a set $Z$ of density zero. By \cite[Corollary 1.12]{ByszewskiKonieczny2017}, this exceptional set $Z$ is even of upper Banach density zero. Therefore, since $x$ has arbitrarily long common factors with $a$, it follows that $x$ is constant on arbitrarily long intervals. This concludes the proof of the theorem in the totally minimal case.

In order to reduce the claim to the totally minimal case, we apply \cite[Remark 1.14]{ByszewskiKonieczny2017}. We get that there exists a positive integer $i$ such that for all integers $0\leq j \leq i-1$ the generalised polynomial $a_{i,j}(n) = a(in + j)$ has a representation of the form \eqref{reprezentacja} with $(X, T)$ totally minimal. If $a$ has arbitrarily long common factors with $x$, then each $a_{i,j}$ has arbitrarily long common factors with one of the automatic sequences $x_{i,j'}(n) = x(in + j')$ for some $0\leq j'\leq i-1$. By the totally minimal case, we may conclude that $a$ coincides with a periodic sequence $p$ outside of a set of upper Banach density zero. Since $x$ has arbitrarily long common factors with $a$, it follows that $x$ coincides with $p$ on arbitrarily long intervals.
\end{proof}

\begin{remark}\label{resultforwp} We now comment on how to modify the proof of Theorem \ref{main} in order to generalise the statement to weakly periodic sequences. We remind the reader that a sequence $x\colon \N_0 \to \R$ is called weakly periodic if for any sequence $x'(n)=x(an+b)$ obtained from $x$ by restriction to an arithmetic subsequence with $a\in \N, b\in \N_0$ there exist $q\in \N$, $r,r'\in \N_0$ with $r \neq r'$ such that $x'(qn+r)=x'(qn+r')$. 

If $x$ is a weakly periodic sequence that attains only  finitely many values, then the proof is mutatis mutandis the same as above. However, in contrast to automatic sequences, weakly periodic sequences may well attain infinitely many values, in which case one needs to reason more carefully. The first obstacle is that the Bergelson--Leibman theorem applies only to bounded sequences.

Let $x$ be a weakly periodic sequence and let $a$ be a generalised polynomial sequence that have arbitrarily long common factors. We will prove that there exist a periodic sequence $p$ such that $a$ and $p$ coincide outside of a set of upper Banach density zero and $x$ has arbitrarily long common factors with $p$. Since $a$ and $x$ take only countably many values, there exists $\theta \in \R$ such that for any two distinct elements $s,s'\in S:=\{a(n) \mid n\in \N_0\}\cup\{x(n)\mid n\in \N_0\}$, we have $\theta(s-s')\notin\Z$. Let $\{t\}=t-\lfloor t \rfloor$ denote the fractional part of $t\in \R$. Replacing $x(n)$ with $\{\theta x(n)\}$ and $a(n)$ with $\{\theta a(n)\}$ allows us to assume that $a$ and $x$ are both bounded.

Applying the Bergelson--Leibman theorem, we get a representation of $a$ in the form $a(n)=f(T^n z)$, where $(X,T)$ is a minimal nilsystem, $z\in X$, and $f\colon X \to \R$ is a semialgebraic function. The same reasoning as above allows us to assume that $(X,T)$ is totally minimal. We now claim that $f$ is constant on a set of full measure. Assume the contrary. Then there exists an open semialgebraic subset $U \subset X$ on which $f$ is continuous and takes at least two distinct values $t_1<t_2$. Applying the above reasoning to the characteristic sequences of the sets $\{n\in \N_0 \mid a(n)<(t_1+t_2)/2\}$ and $\{n\in \N_0 \mid x(n)<(t_1+t_2)/2\}$ (which take values $0$ and $1$), we obtain a contradiction. Thus $f$ is constant on a set of full measure, and hence $x$ is constant outside of a set of upper Banach density zero. The claim follows. \end{remark}

\begin{example}
	Consider the Thue--Morse sequence $t(n) = s_2(n) \bmod{2}$ where $s_2(n)$ denotes the sum of binary digits, and the generalised polynomial
	$$
		a(n) = 
		\begin{cases}
			1 &\text{if } \sqrt 2 n^2 \in (-1/4,1/4) \bmod{1};\\
			0 &\text{otherwise.} 
		\end{cases}
	$$ 
Theorem \ref{main} guarantees that the common factors of $t$ and $a$ have bounded length. In fact, their common factors are precisely: $001011010$, $010110100$, $110100101$, $101001011$, $10100110010$, $01001100101$, $01100101101$, $01001011001$, $01011001101$, $10110011010$, $10011010010$, $10110100110$ and their subfactors. In particular, the longest common factor has length $11$. We explain below how such a computation can be made.

In principle, one can adapt the argument in Theorem \ref{main} to give explicit bounds on the length of the common factors that $t$ and $a$ share. In practice, however, such bounds would be both poor and difficult to obtain, so we proceed otherwise. 

Note first that for a given word $w = (w(0),\dots,w(l-1)) \in \{0,1\}^l$ it is computationally feasible to verify whether $w$ is a factor of $a$. Because $\sqrt{2}$ is irrational, we may freely replace the open interval in the definition of $a$ with a closed one. Hence, if $w$ is a factor of $a$ (appearing at the position $n_0$) then there exists $\beta,\gamma \in [0,1)$ (given by $\beta = \{2 \sqrt{2} n_0\}$ and $\gamma = \{\sqrt{2} n_0^2\}$) such that for $0 \leq m < l$ it holds that 
\begin{align}
	\sqrt{2} m^2 + \beta m + \gamma \in (-1/4,1/4) \bmod{1} \text{ if } w(m) = 1,\label{eq:432a}\\
	\sqrt{2} m^2 + \beta m + \gamma \not \in [-1/4,1/4] \bmod{1} \text{ if } w(m) = 0.\label{eq:432b}
\end{align}
Conversely, the set of $\beta, \gamma$ satisfying \eqref{eq:432a}--\eqref{eq:432b} is open, so it follows from Weyl equidistribution theorem that if \eqref{eq:432a}--\eqref{eq:432b} hold for some $\beta,\gamma$ then $w$ is a factor of $a$. For a fixed choice of $w$, the condition \eqref{eq:432a}--\eqref{eq:432b} is simply a system of linear inequalities\footnote{Strictly speaking, because of the appearance of the fractional part, the situation is marginally more complicated. However, there is a finite number of values that $\floor{\sqrt{2} m^2 + \beta m + \gamma}$ can take for $0 \leq m < l$ when $\beta, \gamma \in [0,1)$. 
Once the values $\floor{\sqrt{2} m^2 + \beta m + \gamma}$ for $0 \leq m < l$ are fixed, \eqref{eq:432a}--\eqref{eq:432b} are precisely equivalent to an alternative of finitely many systems of linear inequalities.}
in real parameters $\beta,\gamma$, and it is computationally feasible to verify whether it is solvable or not. 

The task of finding the factors of $t$ is considerably simpler. Indeed, it follows from \cite[Example 10.9.3]{AlloucheShallit-book} that each factor of $t$ of length $\leqslant 2^l+1$ for $l\geq 1$ can be found at a starting position $\leqslant 9\cdot2^{l-1}$. In particular, all factors of $t$ of length $\leq 17$ can be found by examining the first $88$ entries.

In order to identify all common factors of $t$ and $a$, we verified for each factor of $t$ of length $< 13$ whether it is a factor of $a$ using the argument described above\footnote{Computations were performed using Wolfram Mathematica; source code available from the authors.}. Since no factor of length $12$ was found, these are the only shared factors. 

Similar arguments can be applied to other generalised polynomials and automatic sequences in place of $a$ and $t$.	
\end{example}

\subsection*{Acknowledgements}
We thank Narad Rampersad for his useful comments. JB is supported by the National Science Centre, Poland (NCN) under grant no.\ DEC-2012/07/E/ST1/00185. JK  is supported by the European Research Council (ERC) under grant ErgComNum 682150.

\bibliographystyle{alpha}
\bibliography{bibliography}

\end{document}